\def\spam{\mathop{\rm span}\nolimits}
\def\Re{\mathop{\rm Re}\nolimits}
\def\Harm{\mathop{\rm Harm}\nolimits}
\def\ran{\mathop{\rm ran}\nolimits}
\def\question#1{{\bf Question: }#1}
\def\question#1{}
\def\R{\mathbb{R}}
\def\CC{\mathbb{C}}
\def\HH{\mathbb{H}}
\def\Cd{\C^d}
\def\Hd{\HH^d}
\def\Rd{\R^d}
\def\C{\mathbb{C}}
\newcommand{\RR}{\mathbb{R}}
\newtheorem{theorem}{Theorem}[section]
\newtheorem{corollary}{Corollary}[section]
\newtheorem{lemma}{Lemma}[section]
\newtheorem{example}{Example}[section]
\newenvironment{proof}{{\noindent \it
Proof.}}{\hfill$\Box$\medskip}
\newif\ifdraft\def\draft{\drafttrue\hoffset=.8truecm\showlabeltrue
\def\comment##1{{\bf comment: ##1}}
\headline={\sevenrm \hfill \ifx\filenamed\undefined\jobname\else\filenamed\fi%
(.tex) (as of \ifx\updated\undefined???\else\updated\fi)
 \TeX'ed at {\hour\time\divide\hour by 60{}%
\minutes\hour\multiply\minutes by 60{}%
\advance\time by -\minutes
\the\hour:\ifnum\time<10{}0\fi\the\time\  on \today\hfill}}
}
\def\ip#1{\langle\kern-.28em\langle#1\rangle\kern-.28em\rangle_\nu}
\def\openR{{{\rm I}\kern-.16em {\rm R}}}
\let\ga\alpha
\let\gb\beta
\let\gd\delta
\let\gD\Delta
\let\gTh\Theta
\let\gs\sigma
\let\ga\alpha
\let\gb\beta
\let\gd\delta
\let\gs\sigma
\def\Hom{\mathop{\rm Hom}\nolimits}
\def\Pol{\mathop{\rm Pol}\nolimits}
\def\Harm{\mathop{\rm Harm}\nolimits}
\def\Reg{\mathop{\rm Reg}\nolimits}
\def\Iff{\hskip1em\Longleftrightarrow\hskip1em}
\def\Implies{\hskip1em\Longrightarrow\hskip1em}
\def\formeq{\the\sectionno.\the\equationno}  
\def\elabel#1/#2/#3/{\global\advance\equationno by 1 %
\ifx#1\empty\else\emember#1%
\ifshowlabel\marginal{\string#1}\fi\fi%
\ifmmode\eqno{#3(\formeq#2)}\else#3\formeq#2\fi} 
\def\makeblanksquare#1#2{
\dimen0=#1pt\advance\dimen0 by -#2pt
      \vrule height#1pt width#2pt depth0pt\kern-#2pt
      \vrule height#1pt width#1pt depth-\dimen0 \kern-#1pt
      \vrule height#2pt width#1pt depth0pt \kern-#2pt
      \vrule height#1pt width#2pt depth0pt
}
\title{\bf 
Multivariate Lagrange interpolation and polynomials of
one quaternionic variable
}
\author{Shayne Waldron\\ }
\begin{document}

\maketitle 

\begin{abstract}

This paper considers the extension of classical Lagrange interpolation
in one real or complex variable to ``polynomials of one quaternionic
variable''. To do this we develop some aspects of the theory of such polynomials. 
We then give a number of related multivariate polynomial 
interpolation schemes for $\RR^4$ and $\CC^2$ with good geometric properties, 
and some aspects of least interpolation and of Kergin interpolation.

\end{abstract}

\bigskip
\vfill

\noindent {\bf Key Words:}
quaternions,
quaternionic polynomials,
Lagrange interpolation,
Lagrange polynomials,
Newton form,
Kergin interpolation,
least interpolation,
quaternionic equiangular lines.

\bigskip
\noindent {\bf AMS (MOS) Subject Classifications:}
primary
12E15, \ifdraft Skew fields, division rings \else\fi
41A05, \ifdraft Interpolation in approximation theory \else\fi   
41A10, \ifdraft Approximation by polynomials \else\fi   
41A63, \ifdraft Multidimensional problems \else\fi   


secondary
12E10, \ifdraft Special polynomials in general fields \else\fi
16D10. \ifdraft General module theory in associative algebras \else\fi

\vskip .5 truecm
\hrule
\newpage

\section{Introduction}

The {\bf quaternions} $\HH$ are a celebrated extension of the field of complex numbers
to a noncommutative associative algebra over the real numbers (a skew-field)
with elements
$$ q
=q_1+ q_2i +q_3 j +q_4 k 
=(q_1+ q_2i) +(q_3  +q_4 i)j
\in \HH, \qquad q_j\in\RR. $$
and (see \cite{R14} for the basic theory of $\HH$, which is assumed)
$$ i^2=j^2=k^2-1, \quad
ij=k, \quad jk=i,\quad ki=j, \quad ji=-k,\quad kj=-i,\quad ik=-j. $$
The Lagrange interpolant $Lf$ to a function $f$ at $n+1$ distinct points 
$x_0,x_1,\ldots,x_n$ 
in $\RR$ or $\CC$ is the unique polynomial of degree $n$ matching
the values of $f$ at these points, which can be given explicitly as
\begin{equation}
\label{Lagrangeformulas}
Lf (x) := \sum_j \ell_j(x) f(x_j) , \qquad
\ell_j(x):=\prod_{j\ne k} {x-x_k\over x_j-x_k}.
\end{equation}
Formally, the above formula makes sense for points in $\HH$, giving an interpolant.
However, due to the noncommutativity of the quaternions, the Lagrange polynomials
depend on the order in which the product is evaluated. This is the first indication
that the quaternionic polynomials of degree $n$ (as a right $\HH$-module) might 
have a dimension greater than $n+1$ (depending on how they are defined). 
To resolve this impasse one could
\begin{itemize}
\item Look for an interpolant from a fixed $(n+1)$-dimensional subspace
of quaternionic polynomials of degree $n$.
\item Seek a ``best'' choice of Lagrange polynomials $\ell_j$ for given interpolation points,
which would implicitly define an $(n+1)$-dimensional subspace of interpolants
that is related to the geometry of the points.
\end{itemize}

The first approach has been considered by \cite{B15}, which we discuss in 
the next section. Our approach is the second. The essential features of each are
(respectively):

\begin{itemize}
\item Interpolation is not possible for all configurations of points. The condition
for unique interpolation and the interpolation space are not translation invariant.
The Lagrange polynomials $\ell_j$ may have zeros which are not interpolation points. 
It is possible to develop a Newton form and notion of divided difference.
\item Interpolation is possible for all configurations of points, and the interpolation 
space depends continuously on the points. The interpolant is translation invariant. 
The Lagrange polynomials $\ell_j$ can be chosen to be zero only 
at the interpolation points, and a  Newton form can be developed.
\end{itemize}

We want 
polynomials and functions of a quaternionic variable 
to be an
``$\HH$-vector space''. To do this, we view such spaces a right $\HH$-module
(and co-opt the language of linear algebra). Linear maps
(which are technically $\HH$-homomorphisms) then act on the left,
with the usual algebra of matrices then extending in the obvious way
(cf.\ \cite{R14}). 
The Lagrange interpolant,
as defined above,
is an $\HH$-linear map, since
$$ L(f\ga+g\gb)(x) 
= \sum_j \ell_j(x)(f\ga+g\gb)(x_j)
= \sum_j \ell_j(x)(f(x_j)\ga+g(x_j)\gb)= Lf(x)\ga+Lg(x)\gb. $$ 

\section{Lagrange interpolation from $\HH[z]$}


Bolotnikov \cite{B15}, \cite{B20} uses the  
formal polynomials
$\HH[z]$ in $z$  
$$ f(z)=z^n f_n+\cdots+z f_1+f_0, \qquad f_0,\ldots,f_n\in\HH, $$
on which a left and right evaluation at $a\in\HH$ are defined by
$$ f^{e_l}(a):=\sum_j a^j f_j, \qquad 
f^{e_r}(a):=\sum_j f_ja^j. $$
For $\HH[z]$ as right vector space, left evaluation is linear
but right evaluation is not.

The (left) Lagrange interpolation of \cite{B15} is from the $(n+1)$-dimensional subspace
of polynomials of degree $n$ in $\HH[z]$ to left evaluation 
at $n+1$ points in $\HH$.
Let us consider an example, to see the nature of this interpolation.

\begin{example} There is a unique linear interpolant $p(z)=p_0+zp_1$ to a function $f$ at 
any distinct points $a,b\in\HH$ given by the Lagrange polynomial formula
\begin{align*}
 p(z) &= (z-b)(a-b)^{-1} f(a) +(z-a)(b-a)^{-1}f(b) \cr
&= \left(b(b-a)^{-1} f(a) a(a-b)^{-1}f(b)\right)
+ z \left((a-b)^{-1} f(a) -a(b-a)^{-1}f(b)\right). 
\end{align*}
Now we consider interpolation at the three points $i,j,c\in\HH$. 
Up to a scalar, the quadratic 
Lagrange polynomial from $\HH[z]$ which is zero at $i$ and $j$ is
$$ p(z)=1+z^2, $$
while those given by the Lagrange polynomial formula are
$$ p_1(x)=(x-i)(x-j), \qquad p_2(x)=(x-j)(x-i). $$
We note that $p$ is zero at all quaternions $q$ with $q^2=-1$,
equivalently, $\Re(q)=0$, $|q|=1$, e.g., $ q=k$, 
whereas $p_1$ and $p_2$ are zero precisely at $q=i,j$,
and they are not the same polynomial since $p_1(1)\ne p_2(1)$.
The theory of \cite{B15} is based on the Euclidean algorithm 
for the associative multiplication on $\HH[z]$ given by 
$$ \Bigl(\sum_j z^j a_j \Bigr)*\Bigl(\sum_k z^k b_k\Bigr) 
:= \sum_{j,k}  z^{j+k} a_jb_k. $$
As an example, the ``root'' $z=i$ of $p(z)$ gives a ``linear factor'' as follows
$$ (z^2+1)-\left((z-i)*z+(z-i)*i\right) =0 \Implies
z^2+1=(z-i)*(z+i). $$
Note that $f(z):=(z-i)*(z-j)=z^2-z(i+j)+k$ has $z=i$ as a left root,
i.e., $f^{e_l}(i)=0$, but not $z=j$ (which is a right root).
 \end{example}

Two quaternions $q_1$ and $q_2$ are said to be {\bf similar} (the term
{\it equivalent} is used in \cite{B15}) if $q_2=aq_1a^{-1}$ for some nonzero $a\in \HH$.
This is equivalent to $\Re(q_1)=\Re(q_2)$ and $|q_1|=|q_2|$. Hence
 $i,j,k$ are similar and $1$ and either of $j,k$ are not. 

The left point evaluations $\gd_a:\HH[z]\to\HH:f\mapsto f^{e_l}(a)$, $a\in\HH$, 
are $\HH$-linear functionals on the right vector space $\HH[z]$. These span 
a left vector space, with linear dependencies given by following:

\begin{lemma} (\cite{B15} Lemma 3.1) 
For $f\in\HH[z]$ and $a,b,c\in H$ distinct and similar
\begin{equation}
\label{leftevaldeps}
f^{e_l}(c) = (c-b)(a-b)^{-1} f^{e_l}(a) +(c-a)(b-a)^{-1}f^{e_l}(b),
\end{equation}
i.e.,
\begin{equation}
\label{leftevalfunctdeps}
\gd_c = (c-b)(a-b)^{-1} \gd_a +(c-a)(b-a)^{-1} \gd_b.
\end{equation}
\end{lemma}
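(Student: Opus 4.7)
The plan is to reduce the identity to a claim about monomials using the right-linearity of left evaluation, then dispatch the low-degree cases by bare-hands algebra, and finally propagate via the common minimal polynomial shared by similar quaternions.

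First I would invoke the observation (noted just before the lemma) that each $\gd_a$ is a right $\HH$-linear functional. Both sides of \eqref{leftevaldeps} depend right $\HH$-linearly on $f$, since the coefficients $(c-b)(a-b)^{-1}$ and $(c-a)(b-a)^{-1}$ multiply on the left and hence commute with the right action. Thus it suffices to check the identity on the monomial basis, reducing everything to
\[
c^n = (c-b)(a-b)^{-1} a^n + (c-a)(b-a)^{-1} b^n, \qquad n \geq 0.
\]
Write $\alpha := (c-b)(a-b)^{-1}$ and $\beta := (c-a)(b-a)^{-1} = -(c-a)(a-b)^{-1}$. Combining over the common right factor $(a-b)^{-1}$ gives $\alpha + \beta = [(c-b)-(c-a)](a-b)^{-1} = 1$, which is the $n=0$ case. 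For $n=1$, replacing $\beta$ by $1-\alpha$ lets me collapse
\[
\alpha a + \beta b = \alpha(a-b) + b = (c-b)(a-b)^{-1}(a-b) + b = c,
\]
which neatly sidesteps the order of multiplication.

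Now I would bring in similarity. Since $a,b,c$ share a common real part $r=\Re(a)$ and modulus $s=|a|$, each satisfies the same real quadratic $q^2 - 2rq + s^2 = 0$, and hence the three-term recurrence $q^{n+1} = 2r\, q^n - s^2\, q^{n-1}$. Because $r$ and $s^2$ are real they commute past $\alpha$ and $\beta$, so assuming the monomial identity at exponents $n-1$ and $n$ yields
\[
\alpha a^{n+1} + \beta b^{n+1} = 2r(\alpha a^n + \beta b^n) - s^2(\alpha a^{n-1} + \beta b^{n-1}) = 2r c^n - s^2 c^{n-1} = c^{n+1},
\]
closing the induction. The only delicate point I anticipate is the $n=1$ case, where noncommutativity looks threatening; once the telescoping $\alpha(a-b)+b = c$ is spotted, the rest of the argument is driven by a scalar recurrence and the similarity hypothesis is used only to ensure that the coefficients in that recurrence are real.
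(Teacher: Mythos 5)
Your proof is correct. The paper itself gives no proof of this lemma---it is quoted verbatim from Bolotnikov's paper as \cite{B15} Lemma~3.1---so there is nothing internal to compare against; but your argument (reduce to monomials $z^n$ via right $\HH$-linearity of left evaluation, verify $n=0,1$ directly, then induct using the common real quadratic $q^2-2rq+s^2=0$ satisfied by every member of a similarity class, whose real coefficients commute past $\alpha$ and $\beta$) is sound at every step and is essentially the standard derivation of this dependency.
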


To see this in play, we consider left quadratic Lagrange interpolation from $\HH[z]$.

\begin{example}
We seek a quadratic polynomial $p(z)=p_0+zp_1+z^2p_2$ which left Lagrange interpolates
a function $f$ at distinct points $a,b,c\in \HH$, i.e.,
\begin{align*}
p^{e_l}(a) = p_0+ap_1+a^2 p_2 &= f(a), \cr
p^{e_l}(b) = p_0+bp_1+b^2 p_2 &= f(b), \cr
p^{e_l}(c) = p_0+cp_1+c^2 p_2 &= f(c). 
\end{align*}
%
Gauss elimination gives the row echelon form
\begin{align*}
 p_0+ap_1+a^2 p_2 &= f(a), \cr
 p_1+(b-a)^{-1}(b^2-a^2) p_2 &= (b-a)^{-1}(f(b)-f(a)), \cr
 \{(c-a)^{-1}(c^2-a^2)-(b-a)^{-1}(b^2-a^2)\} p_2 &= (c-a)^{-1}(f(c)-f(a))-(b-a)^{-1}(f(b)-f(a)), 
\end{align*}
and so there is a unique interpolant to every $f$ if and only if
$$ (c-a)^{-1}(c^2-a^2)-(b-a)^{-1}(b^2-a^2) \ne 0. $$
It is easily seen that equality above is equivalent
to taking $f(z)=z^2$ in (\ref{leftevaldeps}), i.e.,
$$ c^2=(c-b)(a-b)^{-1}a^2+(c-a)(b-a)^{-1} b^2 \Iff
(c-a)^{-1}(c^2-a^2)=(b-a)^{-1}(b^2-a^2). $$
\end{example}

The Gauss elimination argument above shows that a necessary condition 
for left (or right) Lagrange interpolation by a polynomial in $\HH[z]$ of 
degree $n$ to any $f$ to $n+1$ distinct points in $\HH$ is that
no three of the points are similar.
This is in fact sufficient.

\begin{theorem} (\cite{B15} Theorem 3.3)
 Left (or right) Lagrange interpolation from the polynomials of degree $n$
in $\HH[z]$ to $n+1$ points in $\HH$ is uniquely possible if and only if the no 
three of the points are similar, i.e., have the same modulus and real part.
\end{theorem}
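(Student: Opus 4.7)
The theorem claims the $\HH$-linear map $L\colon \HH[z]_n \to \HH^{n+1}$, $L(f) := (f^{e_l}(a_0),\ldots, f^{e_l}(a_n))$, is a bijection, where $\HH[z]_n$ denotes polynomials of degree $\le n$. Since both sides are right $\HH$-vector spaces of dimension $n+1$, this is equivalent to injectivity: no nonzero $f \in \HH[z]_n$ has all $n+1$ points as left roots. The necessity direction is immediate from the preceding Lemma: if $a_i, a_j, a_k$ are three distinct similar interpolation points, the Lemma exhibits $\delta_{a_k}$ as a left $\HH$-combination of $\delta_{a_i}$ and $\delta_{a_j}$, forcing the image of $L$ into a proper subspace of $\HH^{n+1}$. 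The substance of the proof is therefore sufficiency.

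I prove sufficiency by induction on $n$, with the base case $n=0$ trivial. Assume $f \in \HH[z]_n$ has $a_0,\ldots,a_n$ as left roots, and split into two cases. \textbf{Case A} (no two $a_i$ similar): the left division theorem in $\HH[z]$ gives $f = (z-a_0) * q$ with $\deg q \le n-1$, and a direct calculation using $c^k(c-a_0) = (c-a_0)\,\tilde c^k$ (where $\tilde c := (c-a_0)^{-1} c\,(c-a_0)$ is similar to $c$) yields the product-evaluation identity $\bigl((z-a_0) * q\bigr)^{e_l}(c) = (c-a_0)\, q^{e_l}(\tilde c)$ for $c \ne a_0$. For each $i \ge 1$, $a_i - a_0 \ne 0$, so $q^{e_l}(\tilde a_i) = 0$, and the $\tilde a_i$ lie in the pairwise-distinct classes $[a_i]$; the inductive hypothesis gives $q = 0$, hence $f = 0$. \textbf{Case B} (some pair $a_0, a_1$ similar): applying the Lemma with $c$ ranging over the similarity class $[a_0]$ forces $f^{e_l}$ to vanish on the entire $2$-sphere $[a_0]$. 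The real-coefficient quadratic $m(z) := z^2 - 2\Re(a_0)\,z + |a_0|^2$ has $[a_0]$ as its zero set; because $m$ is central in $\HH[z]$, one checks $(m*q)^{e_l}(c) = m(c)\,q^{e_l}(c)$ for every $c$. Dividing $f = m * q + r$ with $\deg r < 2$ and evaluating on $[a_0]$ leaves $r^{e_l}$ vanishing on an infinite sphere, forcing $r = 0$; thus $f = m * q$ with $\deg q \le n-2$. The no-three-similar hypothesis implies $a_i \notin [a_0]$ for $i \ge 2$, so $m(a_i) \ne 0$ and $q^{e_l}(a_i) = 0$ for those $i$; the inductive hypothesis applied to $q$ and the $n-1$ points $a_2,\ldots,a_n$ gives $q = 0$, hence $f = 0$.

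The main obstacle I anticipate is establishing the two product-evaluation identities: $((z-a_0)*q)^{e_l}(c) = (c-a_0)\,q^{e_l}(\tilde c)$ in Case A and $(m*q)^{e_l}(c) = m(c)\,q^{e_l}(c)$ in Case B. These are where noncommutativity actively enters, and they rest respectively on the conjugation identity $c^k(c-a_0) = (c-a_0)\tilde c^k$ and on the centrality of the real coefficients of $m$. Once these identities are in hand the induction is a clean alternation between peeling off a linear left-factor and peeling off the central quadratic determined by a spherical zero, with the residual interpolation points automatically inheriting the no-three-similar hypothesis.
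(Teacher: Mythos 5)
The paper does not actually prove this statement: it is imported verbatim from \cite{B15} (Theorem 3.3), and the only supporting material in the text is the Gauss-elimination computation for the quadratic case, which exhibits the pivot $(c-a)^{-1}(c^2-a^2)-(b-a)^{-1}(b^2-a^2)$, identifies its vanishing with similarity of $a,b,c$, and then asserts that the resulting necessary condition ``is in fact sufficient'' with a citation. So there is no in-paper proof to compare against; judged on its own, your argument is correct and is essentially the standard factorization proof from the algebraic theory of $\HH[z]$ (the route of \cite{B15} and the Gordon--Motzkin/Lam tradition): reduce to injectivity by rank--nullity over the skew-field, peel off a left factor $(z-a_0)$ when no two points are similar via the product-evaluation identity $((z-a_0)*q)^{e_l}(c)=(c-a_0)\,q^{e_l}(\tilde c)$, and peel off the central quadratic $m(z)=z^2-2\Re(a_0)z+|a_0|^2$ when two points share a similarity class, using the Lemma to propagate the zero to the whole $2$-sphere $[a_0]$. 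Both product-evaluation identities you single out do hold and are one-line computations from the definition of $*$, and the left-root factor theorem you invoke in Case A is exactly the Euclidean division the paper already alludes to. Two small points to tidy: the induction descends by either $1$ or $2$, so it should be phrased as strong induction (with the $n=1$ instance of Case B handled by noting $\deg f<\deg m$ forces $f=r$ outright); and in Case A you should say explicitly that each $\tilde a_i$ is similar to $a_i$, so the $n$ new points remain pairwise non-similar, which is what licenses the inductive call. Relative to pushing the paper's Gauss-elimination scheme to general $n$ (where one would need to show every pivot is a generalized divided difference that vanishes exactly when three points are similar), your factorization argument is cleaner and, unlike anything in the paper, actually complete.
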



\begin{corollary} For a set $A\subset\HH$ the linear functionals
$\{\gd_a\}_{a\in A}$ given by 
$$\gd_a:\HH[z]\to\HH:f\mapsto f^{e_l}(a), $$
are $\HH$-linearly independent if and only if no three of them are similar.
\end{corollary}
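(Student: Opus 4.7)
The plan is a standard duality argument made possible by the preceding theorem. Since left-$\HH$-linear independence of a family $\{\gd_a\}_{a\in A}$ is, by definition, the statement that every finite subfamily is independent, I restrict attention throughout to a finite subset $\{a_0,\ldots,a_n\}$ of $A$.

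For the ``only if'' direction, I assume three distinct elements $a,b,c$ of $A$ are mutually similar. Then (\ref{leftevalfunctdeps}) rewrites as
\begin{equation*}
\gd_c - (c-b)(a-b)^{-1}\gd_a - (c-a)(b-a)^{-1}\gd_b = 0,
\end{equation*}
which is a nontrivial left-$\HH$-linear dependence among $\gd_a,\gd_b,\gd_c$ (the coefficient of $\gd_c$ is $1$), so $\{\gd_a\}_{a\in A}$ fails to be independent.

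For the ``if'' direction, I assume that no three of $a_0,\ldots,a_n$ are similar. The preceding theorem then guarantees that left Lagrange interpolation from the degree-$n$ polynomials in $\HH[z]$ at these $n+1$ points is uniquely possible, so for each $i$ there exists $\ell_i\in\HH[z]$ of degree at most $n$ with $\ell_i^{e_l}(a_j)=\gd_{ij}$. If $\sum_j c_j\gd_{a_j}=0$ as a left-linear combination of functionals, then evaluating at $\ell_i$ yields $c_i=\sum_j c_j\,\ell_i^{e_l}(a_j)=0$ for every $i$, which is the desired conclusion.

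The one thing requiring care is the noncommutative bookkeeping: one must check that left evaluation is $\HH$-linear on the right module $\HH[z]$ so that the ``dual basis'' $\{\ell_i\}$ really does have the Kronecker property, and that the notion of left-linear independence of the functionals $\gd_a:\HH[z]\to\HH$ matches the one in which the scalars $c_j$ act on the values in $\HH$ from the left. Once these conventions are aligned, the proof is just the dual-basis argument above, and no further obstacle arises.
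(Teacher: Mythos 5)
Your proposal is correct and follows essentially the same route as the paper: the dependency (\ref{leftevalfunctdeps}) handles the case of three similar points, and in the other direction the unique Lagrange interpolants with the Kronecker property (whose existence is exactly what the preceding theorem supplies) are used to kill each coefficient in a vanishing linear combination. The extra remarks about restricting to finite subfamilies and about left evaluation being $\HH$-linear on the right module $\HH[z]$ are consistent with the paper's conventions and do not change the argument.
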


\begin{proof} If three of the points $a,b,c\in A$ are similar, 
then we have the nontrivial linear dependency (\ref{leftevalfunctdeps}).
Conversely, suppose that no three points are similar.
Take a linear combination
$$ \sum_{j=0}^n c_j\gd_{a_j}=0, \qquad a_j\in A, \quad c_j\in \HH, $$
and apply both sides of this to the unique Lagrange interpolant to the function 
which is zero at all the points in $\{a_0,a_1,\ldots,a_n\}$, except $a_j$, 
to conclude that $c_j=0$.
\end{proof}

These results were developed from the notion of {\it $P$-independence} \cite{L86}, \cite{Ll88},
i.e.,
the set of $n+1$ points $A$ is (left) $P$-independent (polynomial independent) if 
the linear functionals $\{\gd_a\}_{a\in A}$ above are linearly independent, 
or, equivalently, there is a subspace of $\HH[z]$ of dimension $n+1$ from which unique
(left) Lagrange interpolation is possible. This has recently been explored in
the multivariate setting \cite{MK19}, \cite{M20}.

\begin{corollary} There is a unique quadratic left Lagrange interpolant
from $\HH[z]$ to the distinct points $a,b,c\in\HH$ if and only if the points
are not all similar. 
The condition for the points to be similar can be expressed as
\begin{equation}
\label{nonsymmthreeptcdn}
(c-a)^{-1}(c^2-a^2)=(b-a)^{-1}(b^2-a^2).
\end{equation}
\end{corollary}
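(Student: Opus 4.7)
The plan is to combine the $n=2$ case of Theorem 2.1 with the Gauss elimination derivation from Example 2.2 and Lemma 2.1. There are really two things to prove: the existence/uniqueness claim, and the algebraic characterization of three points being similar.

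First I would observe that the uniqueness statement is simply the $n=2$ case of Theorem 2.1 (or equivalently Corollary 2.2): three distinct points $a,b,c\in\HH$ fail the ``no three similar'' hypothesis precisely when all three are similar, so a unique quadratic left Lagrange interpolant from $\HH[z]$ exists if and only if $a,b,c$ are not all similar.

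For the algebraic characterization of similarity, I would observe that the row-echelon form derived in Example 2.2 shows that uniqueness of the interpolant fails exactly when $(c-a)^{-1}(c^2-a^2) = (b-a)^{-1}(b^2-a^2)$. Combining this with the previous step already yields the equivalence. For a direct verification of the forward implication, I would apply Lemma 2.1 to $f(z) = z^2$ to obtain $c^2 = (c-b)(a-b)^{-1}a^2 + (c-a)(b-a)^{-1}b^2$, and then use the identity $(c-b)(a-b)^{-1} - 1 = (c-a)(a-b)^{-1} = -(c-a)(b-a)^{-1}$ to rewrite this as $c^2 - a^2 = (c-a)(b-a)^{-1}(b^2 - a^2)$; left-multiplying by $(c-a)^{-1}$ yields the displayed equation. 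The converse direction is obtained either by reversing this algebraic chain (it consists entirely of reversible steps) or by invoking Theorem 2.1 via the Gauss elimination route.

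The main (minor) obstacle is keeping track of left versus right multiplication when combining the two terms on the right-hand side of the Lemma 2.1 identity into a single expression with left factor $(c-a)(b-a)^{-1}$; once the rewriting $(c-b)(a-b)^{-1} = 1 - (c-a)(b-a)^{-1}$ is recognized, the remaining manipulation is purely formal and uses only the associativity of $\HH$ together with the fact that $a-b$ and $b-a$ are invertible (since the points are distinct).
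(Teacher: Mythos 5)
Your proposal is correct and follows essentially the same route as the paper, which leaves the proof implicit: the uniqueness claim is the $n=2$ case of Theorem 2.1, and the characterization of similarity comes from the Gauss elimination in Example 2.2 together with the observation (which you verify explicitly) that equality is what results from taking $f(z)=z^2$ in the dependency of Lemma 2.1. Your explicit algebra with $(c-b)(a-b)^{-1}=1-(c-a)(b-a)^{-1}$ just fills in the paper's ``it is easily seen'' step, and checks out.
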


A symmetric form of (\ref{nonsymmthreeptcdn})
can be obtained by evaluating the symmetrised form
of the linear dependence (\ref{leftevalfunctdeps})
at $f(z)=z^2$.




The condition for unique left Lagrange interpolation is not translation invariant,
except for some real translations or interpolation to less than three points.
For example, quadratic interpolation at $i,j,k$ is not possible, but it is possible at 
$2i,j+i,k+i$ (since $|2i|=2\ne\sqrt{2}=|j+i|$). In a similar vein, the polynomials of 
degree $n$ in $\HH[z]$ when viewed as functions $f:\HH\to\HH:q\mapsto f_0+qf_1+\cdots+q^nf_n$
are not translation invariant, e.g.,
$$ (q+a)^2= q^2+qa+aq+a^2 = q^2+qb+a^2, \qquad\forall q\in\HH, $$
for some $b\in\HH$, if and only if $a$ is real.



Since there is a unique one-dimensional subspace of polynomials of degree $n$ in $\HH[z]$ 
whose left evaluation at $n$ points (with no three similar) is zero, a Newton form for (left) Lagrange 
interpolation can be developed. 



\section{Quaternionic polynomials}

It is now time to understand the nature of the quaternionic polynomials, 
viewed as functions $\HH\to\HH$, obtained from the formula 
(\ref{Lagrangeformulas}) for the Lagrange polynomials.
These involve polynomials of the form
$$ q\mapsto \ga_0 q \ga_1 q \ga_2 \cdots q \ga_{r-1} q \ga_r, \qquad
\ga_j\in\HH, $$
which \cite{S79} calls a {\bf quaternionic monomial} of degree $r$. 
We define the $\HH$-span of these monomials to be $\Hom_r(\HH)$ the 
{\bf homogeneous polynomials} of degree $r$, and $\Pol_n(\HH)$ the 
{\bf polynomials} of degree $k$ to be the $\HH$-span of the 
homogeneous polynomials of degrees $\le n$. 
These definitions extend to multivariate polynomials $\Hd\to\HH$, where each 
occurrence of $q$ in the formula for a monomial is replaced by some coordinate $q_j$.

It is clear from the definitions, that the quaternionic polynomials are a graded
ring, i.e., the product of homogeneous polynomials of degrees $j$ and $k$ is
a homogeneous polynomial of degree $j+k$. To understand the dimensions of these
spaces, we write $q\in\HH$ as
$$ q=t+ix+jy+kz, \qquad t,x,y,z\in\RR, $$
and observe (see \cite{S79}) that
\begin{align}
\label{txyzeqns}
t &= {1\over4} ( q - iqi - jqj - kqk ),  \cr
x &= {1\over4i}( q - iqi + jqj + kqk ), \cr
y &= {1\over4j}( q + iqi - jqj + kqk ), \cr
z &= {1\over4k}( q + iqi + jqj - kqk ).
\end{align}
Hence $t,x,y,z$ are homogeneous monomials (in $q$), as are $\overline{q}$ and $|q|^2=q\overline{q}$, i.e.,
$$ \overline{q}=t-ix-jy-kz=-{1\over2}(q+iqi+jqj+kqk), $$
$$ |q|^2=q\overline{q}
=-{1\over2}(q^2+(qi)^2+(qj)^2+(qk)^2). $$
Every monomial of degree $r$ can be written as a homogeneous polynomial of
degree $r$ in the (real) variables $t,x,y,z$ with quaternionic coefficients.
The monomials in $t,x,y,z$ are linearly independent over $\HH$ by the usual
argument (of taking Taylor coefficients), and so we have
\begin{equation}
\label{Homrdim}
\dim_\HH(\Hom_r(\HH))=\dim_\RR(\Hom_r(\RR^4))= {r+3\choose 3}, 
\end{equation}
\begin{equation}
\label{Polnrdim}
\dim_\HH(\Pol_n(\HH))=\dim_\RR(\Pol_k(\RR^4))= {n+4\choose 4}.
\end{equation}
In particular, the vector spaces of constant, linear and quadratic polynomials
of a single quaternionic variable have dimensions $1,5$ and $15$, respectively.
This contrasts sharply with the real and complex cases, where the 
dimensions are $1,2$ and $3$.

\begin{example}
In view of (\ref{txyzeqns}),
a basis for the linear polynomials is given by $1,q,iq,jq,kq$. 
There is a unique linear Lagrange interpolant to any five points
$x_0,\ldots,x_5$, which are affinely independent as points in $\RR^4$.
An explicit formula for the Lagrange interpolant
$p(q)=p_0+qp_1+iqp_2+jqp_3+kqp_4$ can be obtained by solving the
``linear system''
$$ p(x_j)=p_0+x_jp_1+ix_jp_2+x_jp_3+kx_jp_4 =f(x_j), \qquad 0\le j\le 4, $$
for $p_0,\ldots,p_4\in\HH$ in the skew-field $\HH$. 
The corresponding Lagrange polynomials are 
the barycentric coordinates for the interpolation points. If the points 
are taken to be $0,1,i,j,k$, then the interpolant can be written as
$$ (1-t-x-y-z)f(0)+tf(1)+xf(i)+yf(j)+zf(k). $$
From this a (multivariate) Bernstein interpolant could be developed, 
if desired.
\end{example}

As is evident from this example, Lagrange interpolation from $\Pol_n(\HH)$
is essentially interpolation from $\Pol_n(\RR^4)$, with
the additional feature that formulas can be developed using a single
quaternionic variable $q$, or two complex variables $v,w$, where
\begin{equation}
\label{Cayley–Dickson}
q=v+jw, \qquad v=t+ix={1\over2}(q-iqi), 
\qquad y-iz={1\over2}(-jq+kqi). 
\end{equation}

We now consider interpolation from subspaces of $\Pol_n(\HH)$ that are
a quaternionic analogue of the holomorphic functions. 
A function $f:\HH\to\HH$ is said to be {\bf regular} if
it is in the kernel of the {\bf Cauchy-Feuter operator} $2\partial_\ell$, 
i.e.,
$$ 2\partial_\ell f:= {\partial f\over \partial t}
+i{\partial f\over \partial x}
+j{\partial f\over \partial y}
+k{\partial f\over \partial z}= 0, $$
and to be {\bf harmonic} if
$$ \gD f:= {\partial^2 f\over \partial t^2}
+{\partial^2 f\over \partial x^2}
+{\partial^2 f\over \partial y^2}
+{\partial^2 f\over \partial z^2}= 0. $$
If $f$ is regular, then it is harmonic.
The dimensions of $\Reg_n(\HH)$ and $\Harm_n(\HH)$,
the regular and harmonic homogeneous polynomials of degree $n$, 
are
$$ 
\dim_\HH(\Reg_n(\HH))={1\over 2}(n+1)(n+2), \qquad
\dim_\HH(\Harm_n(\HH)) = (n+1)^2. $$

\begin{example} 
\label{regularlinear}
The harmonic polynomial $q=t+ix+jy+kz$ is not regular, since
$$ {\partial q\over \partial t}
+i{\partial q\over \partial x}
+j{\partial q\over \partial y}
+k{\partial f\over \partial z}
= 1 +i(i)+j(j)+k(k) 
=-2\ne 0. $$
A basis for (the right $\HH$-vector space) $\Reg_1(\HH)$ 
is given by $t+ix,t+jy,t+kz$.
\end{example}

An explicit basis 
$\{P_{k\ell}^n-jP_{k-1,\ell}^n\}_{0\le k\le\ell\le1}$,
for 
 $\Reg_n(\HH)$ is given in \cite{S79},
where 
$$ P_{k\ell}^n(v+jw):=\sum_r (-1)^r v^{[n-k-\ell+r]}\overline{v}^{[r]}w^{[k-r]}\overline{w}^{[\ell-r]}, \quad
v,w\in\CC, \qquad
z^{[j]}:=\begin{cases}
{z^j\over j!}, & j\ge0; \cr
0, & j<0.
\end{cases}
$$
Here 
$$ q=t+ix+jy=kz=(t+ix)+j(y-iz)=v+jw, $$
so for $n=1$, 
$P_{00}^1(q)=v$, $P_{01}^1(q)=\overline{w}$, $P_{11}^1(q)=-\overline{v}$.
But $Q_{01}^1(q)=\overline{w}$ is not regular. Interchanging
$k$ and $\ell$ in the formula (presumably a typo), gives $P_{01}^1(q)=w$,
and the basis
$$ v=t+ix, \qquad w=y-iz=(t+jy)(-j)+(t+kz)j, $$
$$ -\overline{v}-j{w} =-(t-ix)-j(y-iz) 
= (t+ix)-(t+jy)-(t+kz).  $$

The product of regular functions is not (in general) regular, 
since
$$ 2\partial_\ell \bigl( (t+ix)(t+jy)\bigr) = 2ix, \qquad
2\partial_\ell \bigl( (t+jy)(t+ix)\bigr) = 2jy. $$
However, multiplying the basis 
of Example \ref{regularlinear}
by  $i,j,k$, 
to get $it-x,jt-y,kt-z$, we have
$$ 2\partial_\ell \bigl( (it-x)(jt-y)\bigr) = 2kt, \qquad
2\partial_\ell \bigl( (jt-y)(it-x)\bigr) = -2kt, $$
so that the average
$ {1\over2} \{(it-x)(jt-y)+(jt-y)(it-x)\} $
is regular. In this way, \cite{S79} gives a basis for $\Reg_n(\HH)$
consisting of symmetrised products of the linear regular polynomials
$it-x,jt-y,kt-z$, where the factors occur $n_1,n_2,n_3$ times, 
$n_1+n_2+n_3=n$. 

Though the (Feuter) regular polynomials are a proper subspace of 
the quaternionic polynomials (which share some aspects of
holomorphic polynomials, but not that power series, since $q$ is 
not regular), 
we do not readily see a practical 
way to interpolate from them. 
There has recently been considerable interest in 
Cullen-regular functions \cite{GS07}, which do have power series expansions
(around 0 or a real centre), and so correspond to the $(n+1)$-dimensional 
subspace of formal polynomials $\HH[z]$ in $\Pol_n(\HH)$, 
which we have already discussed.

\section{Multivariate Lagrange interpolation}

We now consider interpolation methods derived from 
(\ref{Lagrangeformulas}). As already discussed, 
this is essentially 
multivariate polynomial interpolation to functions $\RR^4\to\HH$.

\begin{example}
The order in which the factors of $\ell_j(x)$ are calculated is important.
For $x_0=i$, $x_1=j$, $x_2=k$, we might take ``$\ell_0(x)$'' to be
$$ p(x) := (x-j)(x-k)(i-j)^{-1}(i-k)^{-1}, \qquad
p(i) = {1\over2}(-1-i-j-k). $$
This is not $1$ at $x_0$, and so care must be taken with the order
of multiplication in (\ref{Lagrangeformulas}). 
Natural choices for the multiplication order are to evaluate each quotient first (with right scalar 
multiplication), or to take the product of the numerators, and then right multiply this
by the inverse of its value at $x_j$, in concrete terms for $\ell_0$ for three points
either of
$$ (x-x_1)(x_0-x_1)^{-1} (x-x_2)(x_0-x_2)^{-1}, \qquad
(x-x_1) (x-x_2) \bigl( (x-x_1) (x-x_2))\bigr)^{-1}. $$
\end{example}

Either of the choices for computing $\ell_j(x)$ suggested above give
\begin{itemize}
\item A unique linear
Lagrange interpolation operator to any points $x_0,\ldots,x_n\in\HH$ from the 
$(n+1)$-dimensional subspace $\spam\{\ell_j\}$ of $\Pol_n(\HH)$,
where the Lagrange polynomials have precisely $n$ zeros. 
\item The operator depends continuously on the interpolation points.
\item
It could be ``symmetrised'' to obtain an operator which doesn't
depend on the ordering of the points (though the Lagrange polynomials
might now have additional zeros).
\end{itemize}
 In this vein, we now define a generic 
polynomial of degree $n$ which is zero at $n$ points.
For points $x_1,\ldots,x_n\in\HH$, let 
$$ p_{\{x_1,\ldots,x_n\}} (x) : =
{1\over n!} \sum_{\gs\in S_{n}} (x-x_{\gs1})(x-x_{\gs2})\cdots
(x-x_{\gs n}), $$
where $S_n$ is the symmetric group. This polynomial of degree $n$ is zero at the 
points $x_1,\ldots,x_n$, and (by construction) does not depend on their ordering.

We now present two possible choices for the Lagrange polynomials:
\begin{equation}
\label{ljchoiceI}
\ell_j(x):= p(x) p(x_j)^{-1}, \qquad p(x)=p_{\{x_0,\ldots,x_n\}\setminus\{x_j\}}(x), 
\end{equation}
provided that $p(x_j)\ne0$, and
\begin{equation}
\label{ljchoiceI}
 \ell_j(x):= {1\over n!}\sum_{\gs\in S_{n+1}\atop \gs j=j}
\prod_{k\ne j} (x-x_{\gs k})(x_j-x_{\gs k})^{-1},
\end{equation}
where the factors above are multiplied in the order $k=0,1,\ldots,n$ (or any fixed order).
Both are independent of the point ordering, and depend continuously on the points.
Let $L_\gTh$ be the corresponding Lagrange interpolation operator
$$ L_\gTh f (x):=\sum_j \ell_j(x) f(x_j), $$
for the points $\gTh=\{x_0,\ldots,x_n\}\subset\HH$, which does not 
depend on their ordering. We have
\begin{itemize}
\item The interpolation operator $L_\gTh$ depends continuously on the points $\gTh$.
\item The interpolation space $\Pi_\gTh:=\ran(L_\gTh)$ depends continuously 
on the points $\gTh$.
\item The interpolation operator is translation invariant, i.e., 
$$ L_{\gTh+a} f(x) = L_\gTh\bigl(f(\cdot+a)\bigr)(x-a). $$
\end{itemize}

We compare this Lagrange interpolation with the two most prominent multivariate generalisations
of univariate Lagrange interpolation, where the interpolation points are not in some 
predetermined geometric configuration. 

Kergin interpolation \cite{MM80}
interpolates function values at $n+1$ points in $\Rd$ by a polynomial of degree $n$,
with other ``mean-value'' interpolation conditions (see \cite{W97}) that depend
continuously on the points also matched. Here the interpolation space $\Pol_n(\Rd)$ is fixed,
and hence depends continuously on the points. Kergin interpolation has also been
extended to $\Cd$ (see \cite{F97}). Using the identifications $\HH\approx\RR^4$, $\HH\approx \CC^2$
one can defined a Kergin interpolation to functions $\HH\to\HH$ from the whole space $\Pol_n(\HH)$
(with additional interpolation conditions). The explicit formulas for Kergin interpolation involve
derivatives of $f$ (the operator is defined for $C^n$-functions), and are not as easily 
computed as ours.

Least interpolation \cite{BR92} is a very general method, which seeks an interpolation space
$\Pi_\gTh$ of dimension $n+1$ to the $n+1$ points $\gTh$, which has polynomials of lowest (least) degree. 
It has many nice properties that include the continuity properties listed above, but 
there is no explicit formula. It could be
applied to functions $\HH\to\HH$ in the same way that Kergin interpolation can be. It might
even be possible to develop a least interpolation for ``polynomials in $\HH^d$'', once such 
a theory is developed. The least interpolation has the advantage that polynomials of low
degree are used. In particular, the Lagrange polynomials are a partition of unity, i.e.,
$$ \sum_j \ell_j =1. $$
For the Lagrange polynomials that we have proposed, this may not be the case.
Indeed, the set of all possible ``nice'' Lagrange polynomials $\ell_j$ for $x_j$, i.e, those polynomials 
$p=p_{\{x_0,\ldots,x_n\}}$ of degree $n$,
with $p(x_k)=\gd_{jk}$ and $p$ not depending on the order of the points, form
an affine subspace of $\Pol_n(\HH)$, from which we have suggested two choices.
It may be that requiring, in addition, the partition of unity property, gives a unique
choice, but we have not pursued this. 

Since our Lagrange interpolation is effectively interpolation to functions $\RR^4\to\HH$,
we can define an interpolation operator to functions $\RR^4\to\RR$ in the natural way, i.e.,
$$ Lf:=\Re(L_\gTh f)= \sum_j \Re(\ell_j) f(x_j) , \qquad
\hbox{where $f:\HH\to\RR$}, $$
and $\gTh$ is the points viewed as a subset of $\HH$. 
Heuristically, the real Lagrange polynomials $\hat\ell_j =\Re(\ell_j)$ are 
more likely to be ``nice'', e.g., form a partition of unity, have 
no extra zeros, or coincide for both 
choices, since there is the ``commutativity relation''
$$ \Re(ab) =\Re(ba), \qquad a,b\in\HH. $$

In a similar way, one could define a Lagrange interpolation operator to functions
$\CC^2\to\CC$, by using the Cayley-Dickson construction (\ref{Cayley–Dickson}).

It is also possible to develop Lagrange interpolants through a Newton form
(either for functions $\HH\to\HH$ or $\RR^4\to\RR$), and an associated theory
of divided differences. For example, if $L_{n-1} f$ is a Lagrange interpolant
at $x_1,\ldots,x_{n-1}\in \HH$, then
$$ L_n f (x) := L_{n-1} f(x) + p_n(x) [x_0,\ldots,x_n]f,  \qquad
\hbox{where $p\in\Pol_n(\HH)$, \quad 
$p(x_j)=\gd_{jn}$}, $$
gives a Lagrange interpolant $L_nf$ to $f$ at the points $x_0,\ldots,x_n$, 
and a ``divided difference'' $[x_0,\ldots,x_n]f\in\HH$. 
Choices for $p$ could include $p_{\{x_0,\ldots,x_{n-1}\}}$ or $\ell_n$ 
(for $x_0,\ldots,x_n$).
The map $f\mapsto [x_0,\ldots,x_n]f\in\HH$ is an $\HH$-linear functional. 
We have not invesigated its divided difference type properties any further.


\section{Concluding remarks}

This work came about when investigating tight frames and spherical 
designs for $\Hd$ (see \cite{W18}, \cite{W20}). It soon became apparent
that the theory of quaternionic polynomials of one and several variables
is involved, and not widely known. There is considerable work in at least
two different directions. One is a formal (algebraic) approach dating back 
to \cite{O33}, \cite{L86}, where point evaluation and multiplication of 
polynomials are appropriately defined, and the other views the polynomials
as functions, with the usual point evaluation and (pointwise) multiplication.

Here we have shown that

\begin{itemize}
\item The space of quaternionic polynomials required
for the classical formula (\ref{Lagrangeformulas})
for Lagrange interpolation 
to make sense has a high dimension (\ref{Polnrdim}).
\item Lagrange interpolation methods with some geometric 
properties, e.g., translation invariance, are essentially
multivariate polynomial interpolation methods.
\item Many basic questions about quaternionic polynomials
of one (and several) variables remain, e.g., the existence
of Lagrange polynomials with no extra zeros which form 
a partition of unity.
\end{itemize}
Functions of quaternionic variables have long been used in physics, and geometric design
(cf.\ \cite{FGMS19}).  We hope this paper gives some insight into polynomials of one or more
quaternionic variables, and their use in interpolation and cubature (spherical designs).


\bibliographystyle{alpha}
\bibliography{references}
\nocite{*}

\end{document}